\numberwithin{equation}{section}
\numberwithin{figure}{section}
\theoremstyle{plain}
\newtheorem{thm}{Theorem}[section]
\newtheorem{lem}[thm]{Lemma}
\theoremstyle{remark}
\newcommand{\M}{\operatorname{M}}
\newcommand{\wt}{\operatorname{\textbf{wt}}}
\newcommand{\tr}{\operatorname{tr}}
\title{A New Proof for a Triple Product Formula for Plane Partitions}
\author{Tri Lai\footnote{This research was supported in part by the Institute for Mathematics and its Applications with funds provided by the National Science Foundation (grant no. DMS-0931945).}\\
\small Department of Mathematics\\[-0.8ex]
\small University of Nebraska -- Lincoln\\[-0.8ex]
\small Lincoln, NE 68588\\
\small Email: \tt tlai3@unl.edu\\
%\small Website:\url{http://ima.umn.edu/~tmlai/}
}
\date{\small Mathematics Subject Classifications: 05A15,  05C30, 05C70}
\begin{document}
\maketitle

\begin{abstract}
 Stanley generalized MacMahon's classical theorem by proving a product formula for the norm-trace generating function for plane partition with unbounded parts. In his recent work on biothorgonal polynomials, Kamioka proved a finite analogue of Stanley's formula for plane partitions with bounded parts  (\href {http://arxiv.org/abs/1508.01674}
  {\path{arXiv:1508.01674}}).  In this paper, we use techniques from the enumeration of tilings to give a new proof for Kamioka's formula.

\bigskip\noindent \textbf{Keywords:} plane partitions, perfect matchings, lozenge tilings, dual graphs,  graphical condensation.
\end{abstract}

\section{Introduction}
A \emph{partition} $\lambda$ of $n$ to be non-increasing sequence of  positive integers $(\lambda_1,\lambda_2,\dotsc, \lambda_r)$ such that
 $\lambda_1+ \lambda_2+\dotsc+ \lambda_r=n$. Given a partition $\lambda=(\lambda_1,\lambda_2,\dotsc, \lambda_r)$ of $n$, a \emph{plane partition} of $n$ with the shape $\lambda=(\lambda_1,\lambda_2,\dotsc, \lambda_r)$ is an array of non-negative integers of the form
\begin{equation}
\pi=\begin{tabular}{rccccccccc}
$\pi_{1,1}$   &$\pi_{1,2}$                 &$\pi_{1,3}$               & $\dotsc$               &  $\dotsc$                        & $\dotsc$                            &   $\pi_{1,\lambda_1}$ \\\noalign{\smallskip\smallskip}
$\pi_{2,1}$   &  $\pi_{2,2}$              & $\pi_{2,3}$             &  $\dotsc$               & $\dotsc$                        &         $\pi_{2,\lambda_2}$&          \\\noalign{\smallskip\smallskip}
$\vdots$    &       $\vdots$            & $\vdots$                &        $\vdots$         &     \reflectbox{$\ddots$\quad}               &    &              \\\noalign{\smallskip\smallskip}
 $\pi_{r,1}$  &  $\pi_{r,2}$               & $\pi_{r,3}$              &     $\dotsc$             &   $\pi_{r,\lambda_r}$ &                                          &           \\\noalign{\smallskip\smallskip}
\end{tabular},
\end{equation}
where all rows and columns are weakly decreasing from left to right and from top to bottom, respectively. The \emph{norm} (or  the \emph{volume}) $|\pi|$ of the plane partition $\pi$ is defined to be the sum of all entries in $\pi$. The entries in $\pi_{i,j}$ are called the \emph{parts} of $\pi$.

Denote by  $\mathcal{P}(r,c,n)$ the set of all plane partitions having at most $r$ rows, $c$ columns, and the  maximal part $\pi_{1,1}$ at most $n$. MacMahon \cite{Mac} showed that the \emph{norm generating function} of plane partitions in $\mathcal{P}(r,c,n)$ is given by the following triple product
\begin{equation}\label{Macformula1}
\sum_{\pi\in \mathcal{P}(r,c,n)}q^{|\pi|}=\prod_{i=1}^{r}\prod_{i=1}^{c}\prod_{i=1}^{n}\frac{1-q^{i+j+k-1}}{1-q^{i+j+k-2}}.
\end{equation}
By letting $n\rightarrow \infty$, we obtain the norm generating function for the plane partitions with unbounded parts
\begin{equation}\label{Macformula2}
\sum_{\pi\in \mathcal{P}(r,c)}q^{|\pi|}=\prod_{i=1}^{r}\prod_{i=1}^{c}\frac{1}{1-q^{i+j-1}},
\end{equation}
where $\mathcal{P}(r,c)$ denotes the set of plane partitions with at most $r$ rows and $c$ columns.

Stanley \cite{Stanley2} generalized (\ref{Macformula2}) by introducing the \emph{trace} of the plane partition $\pi$
\begin{equation}
\tr(\pi):=\sum_{i=1} \pi_{i,i}
\end{equation}
and proving a closed form product formula for the \emph{norm-trace generating function}
\begin{equation}\label{staneq}
\sum_{\pi\in\mathcal{P}(r,c)}q^{|\pi|}a^{\tr(\pi)}=\prod_{i=1}^{r}\prod_{i=1}^{c}\frac{1}{1-aq^{i+j-1}}.
\end{equation}

A \emph{Young diagram} $[\lambda]$ of a partition $\lambda=(\lambda_1,\lambda_2,\dotsc, \lambda_r)$ (or of shape $\lambda=(\lambda_1,\lambda_2,\dotsc, \lambda_r)$) is a collection of unit squares
$(i, j)$ on a square grid $\mathbb{Z}^2$, with $1 \leq i \leq r$ and $1 \leq  j \leq  \lambda_i$ (see Figure \ref{YoungD}(a)).
 We usually write the entries of a plane partition in a Young diagram of the same shape (see Figure \ref{YoungD}(b)).

\begin{figure}\centering
\includegraphics[width=12cm]{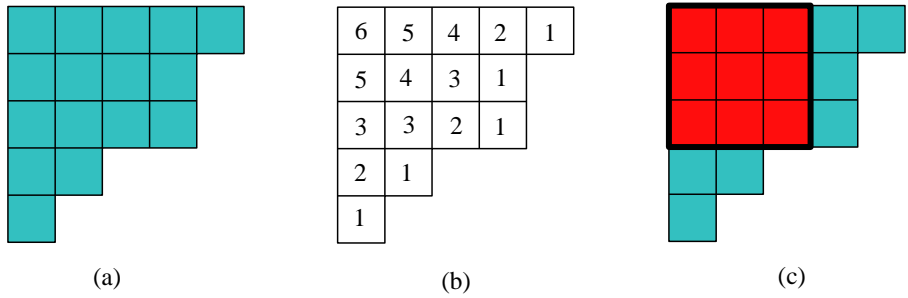}
\caption{(a) Young diagram of shape (5,4,4,2,1). (b) A plane partition of the same shape. (c) Durfee square of a Young diagram.}
\label{YoungD}
\end{figure}
The largest square $D = \{(i, j), 1 \leq i, j \leq r\}$ fitting in the Young diagram $[\lambda]$ is called the \emph{Durfee square} of the Young diagram $[\lambda]$ (or of the partition $\lambda$) (see the square restricted by the bold contour in Figure \ref{YoungD}(c))

For a plane partition $\pi$ of shape $\lambda$ and a number $1\leq k\leq \pi_{1,1}$, we define the  \emph{$k$-truncation} $\pi^{(k)}$ of $\pi$ to be the plane partition obtained from $\pi$ by removing all parts less than $k$ (see the plane partitions restricted in the contour in the left pictures in Figures \ref{YoungD2}(a)--(e)). We call the partition corresponding to the shape $\lambda$ of the plane partition $\pi^{(k)}$ the \emph{$k$-cross section} of $\pi$, denoted by $\lambda^{(k)}(\pi)$ (see the right pictures in Figures \ref{YoungD2}(a)--(e) for $k=2,3,4,5,6$, respectively). It is obvious that $\pi^{(1)}=\pi$ and $\lambda^{(1)}(\pi)=\lambda$.

\begin{figure}\centering
\includegraphics[width=12cm]{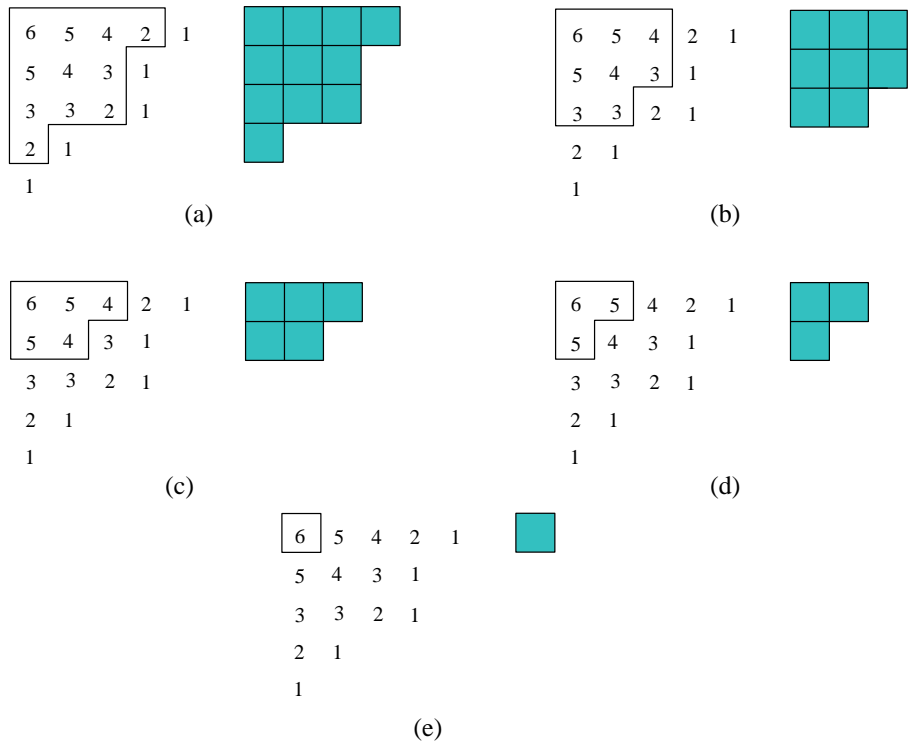}
\caption{The $k$-truncations and the corresponding Young diagram of the $k$-cross sections of the  plane partition in Figure \ref{YoungD}, for (a) $k=2$, (b) $k=3$, (c) $k=4$, (d) $k=5$, and (e) $k=6$.}
\label{YoungD2}
\end{figure}

Recently, Kamioka \cite{Kam} uses biorthogonal polynomials and lattice paths to prove a striking finite analogue of Stanley's formula (\ref{staneq}) for plane partitions in $\mathcal{P}(r,c,n)$.
\begin{thm}\label{kamiokathm} Assume that $r,c,n$ are nonnegative integers. Then
\begin{equation}\label{Kamiokaeq}
\sum_{\pi}q^{|\pi|}a^{\tr(\pi)}\prod_{k=1}^{\pi_{1,1}}\frac{(q^{n-k+1};q)_{D_{k}(\pi)}}{(aq^{n-k+1};q)_{D_{k}(\pi)}}
=\prod_{i=1}^{r}\prod_{j=1}^{c}\prod_{k=1}^{n}\frac{1-aq^{i+j+k-1}}{1-aq^{i+j+k-2}},\end{equation}
where
\[(x;q)_N=(1-x)(1-xq)\dots(1-xq^{N-1})\]
and where $D_{k}(\pi)$ is the size of the Durfee square of the $k$-truncation $\lambda^{(k)}(\pi)$ of $\pi$.
\end{thm}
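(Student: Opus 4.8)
The plan is to convert the weighted sum on the left of \eqref{Kamiokaeq} into a weighted enumeration of perfect matchings and then evaluate it by graphical condensation. First I would use the classical encoding of a plane partition $\pi\in\mathcal{P}(r,c,n)$ as a stack of unit cubes in the $r\times c\times n$ box; orthogonally projecting the stack onto a plane turns $\pi$ into a lozenge tiling of the semiregular hexagon $H=H(r,c,n)$ with side lengths $r,c,n,r,c,n$. Passing to the dual graph $G=G(r,c,n)$ --- the honeycomb graph whose vertices are the unit triangles of $H$ and whose edges join triangles sharing a side --- identifies $\mathcal{P}(r,c,n)$ with the set $\M(G)$ of perfect matchings of $G$, each lozenge corresponding to a matching edge.

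The next step, and the conceptual heart of the argument, is to choose an edge weight $\wt$ so that $\wt(M)=\prod_{e\in M}\wt(e)$ reproduces the entire summand on the left of \eqref{Kamiokaeq}. The key observation I would establish is that this summand localizes to the diagonal of $\pi$. Indeed, writing $D_k(\pi)=\#\{i:\pi_{i,i}\geq k\}$ one has $\tr(\pi)=\sum_{k\geq 1}D_k(\pi)$, so that
\[
a^{\tr(\pi)}\prod_{k=1}^{\pi_{1,1}}\frac{(q^{n-k+1};q)_{D_k(\pi)}}{(aq^{n-k+1};q)_{D_k(\pi)}}
=\prod_{(i,k):\pi_{i,i}\geq k}\frac{a\,(1-q^{\,n-k+i})}{1-a\,q^{\,n-k+i}},
\]
the product ranging over the diagonal cubes $(i,i,k)$ of the stack. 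Together with the factor $q^{|\pi|}$, which is obtained in the usual way by giving each cube weight $q$, the whole summand thus becomes a genuine product over the cubes of $\pi$. This lets me realize it as a product of local edge weights on $G$: ordinary lozenges carry the monomial $q$-weight recording volume, while the lozenges meeting the main diagonal of the projection carry the rational weight displayed above.

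With this weighting fixed I would prove that $\sum_{M\in\M(G)}\wt(M)$ equals the triple product on the right of \eqref{Kamiokaeq} by induction on $r,c,n$ using Kuo's graphical condensation. Choosing four boundary vertices of $G$ in the configuration demanded by a Kuo-type identity yields a relation expressing the weighted matching sum of $G(r,c,n)$ through those of the neighbouring graphs obtained by decrementing one of the parameters. I would then verify, by direct manipulation of the $q$-product, that the right-hand side of \eqref{Kamiokaeq} obeys the same recurrence, and check the degenerate cases $r=0$, $c=0$ and $n=0$, in each of which both sides collapse to $1$. Since a weighted matching sum is pinned down by a condensation recurrence together with its base cases, the two sides agree.

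The step I expect to be the main obstacle is making the weighting and the condensation compatible. Because the diagonal weights $a(1-q^{\,n-k+i})/(1-aq^{\,n-k+i})$ are rational and depend on the position $(i,k)$ and on $n$, the graph $G$ is equipped with a non-uniform, non-translation-invariant weighting; the four sub-regions produced by Kuo's identity must therefore be shown to inherit exactly the weights prescribed by the smaller parameters, so that the matching recurrence lines up termwise with the product recurrence. Arranging the four vertices so that all sub-regions remain non-degenerate throughout the induction --- and treating separately the small cases where this fails --- is where I expect the real bookkeeping to concentrate.
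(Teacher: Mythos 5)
Your overall strategy coincides with the paper's: encode $\pi$ as a lozenge tiling of the hexagon $H_{r,c,n}$, rewrite the summand of \eqref{Kamiokaeq} as a product over the unit cubes of the stack (your diagonal--localization identity, using $D_k(\pi)=\#\{i:\pi_{i,i}\geq k\}$ and $\tr(\pi)=\sum_k D_k(\pi)$, is exactly the computation carried out inside the paper's Lemma), then apply Kuo's condensation and verify that the triple product satisfies the same recurrence, with base cases $r=0$, $c=0$, $n=0$.

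There is, however, one genuine gap, and you have misplaced where the difficulty sits. You assert that the cube-product form of the summand ``lets me realize it as a product of local edge weights on $G$,'' but as written it does not: the weight you would attach to a horizontal lozenge is $q^{h}$ (and, on the diagonal, the product of $a(1-q^{n-k+i})/(1-aq^{n-k+i})$ over $k\leq h$), where $h$ is the height of the column that lozenge caps. The height $h$ is \emph{not} determined by the lozenge's position in the hexagon: the fiber of the $(1,1,1)$-projection over a given lozenge position contains the columns $(i,j,h),(i+1,j+1,h+1),\dots$, so $h$ depends on the rest of the tiling, and Kuo's theorem requires a fixed weight on each edge of a fixed graph. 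The paper repairs this by decomposing every tiling into $r$ nonintersecting lattice paths, each of which carries exactly $c$ horizontal lozenges, and multiplying the weight of every horizontal lozenge on the $i$-th path by $q^{-n-i+1}$ (with the analogous factor on the diagonal): this rescales every tiling by the \emph{same} global constant while converting $q^{h}$ into $q^{-t}$, where the depth $t$ \emph{is} a function of position alone. Only after this renormalization --- whose global constants, including the factor $A$ arising on the diagonal, must then be carried through the condensation identity --- can Kuo's theorem be invoked. By contrast, the non-uniformity of the diagonal weights, which you single out as the main obstacle, causes no difficulty for condensation; the only care needed there is tracking the forced lozenges and the single factor $(1-aq^{n+1})/(1-q^{n+1})$ (resp.\ $(1-q^{n})/(1-aq^{n})$) per diagonal lozenge when matching the weights of $H_{r-1,c-1,n+1}$ and $H_{r,c,n-1}$ to the weight assignment for the original $n$. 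Note also that the induction is on $r+c+n$, since one of the four Kuo terms is $H_{r-1,c-1,n+1}$, which increments $n$ rather than decrementing a parameter.
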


One readily sees that Kamioka's formula also implies MacMahon's classical formula (\ref{Macformula1}) by specifying $a=1$.

%We notice that $tr(\pi)$ is exactly the sum of all $D_{k}(\pi)$'s, for $k=1,2,\dotsc,\pi_{1,1}$.
\bigskip

The plane partitions in $\mathcal{P}(r,c,n)$ can be viewed as piles (or stacks) of unit cubes fitting in a $r\times c \times n$ box (see Figures \ref{BoxedPP}(a)--(c)). The latter in turn are in bijection with the \emph{lozenge tilings} of a semi-regular  hexagon $H_{r,c,n}$ of sides $r,c,n,r,c,n$ (in cyclic order) on the triangular lattice (see Figures \ref{BoxedPP}(c) and (d)).  Here a \emph{lozenge} is the union of any two unit equilateral triangles sharing an edge, and a \emph{lozenge tiling} of a region is a covering of the region by lozenges so that there are no gaps and overlaps. In the view of this,  MacMahon's identity (\ref{Macformula1}) and Kamioka's identity (\ref{Kamiokaeq}) give triple product formulas for weighted sums of lozenge tilings of the semi-regular hexagon $H_{r,c,n}$.  We refer the reader to \cite{Tri1} and \cite{Tri2} for more similar generalizations of the MacMahon formula (\ref{Macformula1}).

The goal of this paper is using techniques  in enumeration of tilings, in particular the graphical condensation introduced by Eric Kuo \cite{Kuo}, to give a new proof for the triple-product formula (\ref{Kamiokaeq}).

\begin{figure}\centering
\includegraphics[width=12cm]{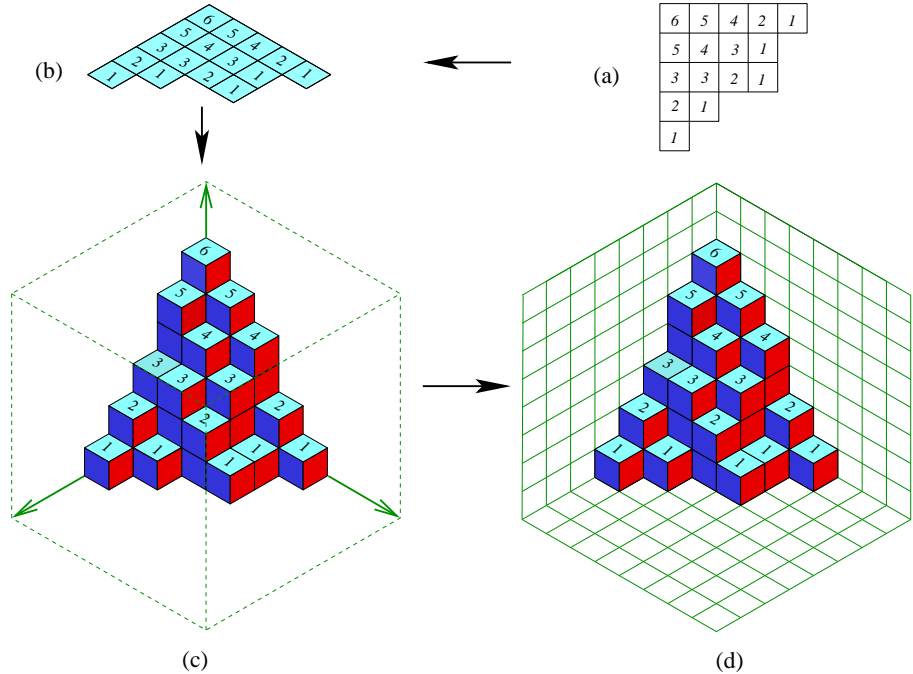}
\caption{Bijection between plane partitions in $\mathcal{P}(8,8,8)$ and lozenge tilings of the hexagon $H_{8,8,8}$.}
\label{BoxedPP}
\end{figure}

\section{Preliminaries}

Lozenges of a region can carry weights; and we define the \emph{weight} of a lozenge tiling of the region to be the weight product of its lozenges. In the weighted case, we use the notation $\M(R)$ for the sum of weights of all lozenge tilings in $R$. We call $\M(R)$ a \emph{tiling generating function} of $R$.

A \emph{forced lozenge} of a region is the lozenge contained in any tiling of the region. Assume that one removes several forced lozenges $\ell_1,\ell_2,\dots,\ell_k$ from a region $R$ whose weights are $\wt(\ell_1),\wt(\ell_2),\dots,\wt(\ell_k)$. We get a new region $R'$ and obtain
\begin{equation}\M(R)=\M(R')\prod_{i=1}^{k}\wt(\ell_i).\end{equation}

A \emph{(perfect) matching} of a graph $G$ is a collection of edges that cover each vertex exactly once. The \emph{dual graph} $G$ of a region $R$ on the triangular lattice is the graph whose vertices are the unit triangles in $R$ and whose edges connect precisely two unit triangles sharing an edge. If lozenges in $R$ are weighted, we assign to each edge in the dual graph $G$ the same weight as that of  its corresponding lozenge. Thus, one can identify the tilings of $R$ with the matchings of its dual graph $G$. In the view of this, we use the same notation $\M(G)$ for the sum of weights of all matchings in $G$, where the weight of a matchings is the weight product of its edges. We usually call $\M(G)$ a \emph{matching generating function} of $G$.

In 2004, Eric Kuo \cite{Kuo} used a combinatorial interpretation of the well-known \emph{Dodgson condensation} (or \emph{Desnanot-Jacobi identity}, see e.g. \cite{Dod}) to (re)prove the Aztec diamond theorem by Elkies, Kuperberg, Larsen and Propp \cite{Elkies1, Elkies2}. In particular, he proved the following condensation:
\begin{thm}[Theorem 5.1 in \cite{Kuo}]\label{Kuothm}
Let $G=(V_1,V_2,E)$ be a (weighted) planar bipartite graph with $|V_1|=|V_2|$. Assume that $u,v,w,s$ are four vertices appearing in a cyclic order on a face of $G$. Assume in addition that $u,w\in V_1$ and $v,s\in V_2$. Then
\begin{equation}\label{Kuoeq}
\M(G)\M(G-\{u,v,w,s\})=\M(G-\{u,v\})\M(G-\{w,s\})+\M(G-\{u,s\})\M(G-\{v,w\}).
\end{equation}
\end{thm}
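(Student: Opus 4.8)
The plan is to prove the identity by the classical superposition-of-matchings argument, reading each of the four matching generating functions as a sum over ordered pairs of matchings and comparing the resulting configurations. Writing $\M(H_1)\M(H_2)=\sum_{(M_1,M_2)}\wt(M_1)\wt(M_2)$, where $M_1,M_2$ run over the perfect matchings of $H_1,H_2$, I would superpose each pair into the multiset of edges $U=M_1\cup M_2$. Since $M_1$ and $M_2$ each cover all but a fixed set of vertices, every vertex of $U$ has degree $2$ except those left uncovered by exactly one of the two matchings, which have degree $1$. Hence each $U$ splits into doubled edges (edges lying in both matchings), even alternating cycles, and alternating paths whose endpoints are precisely the degree-one vertices. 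For all four products these degree-one vertices are exactly $u,v,w,s$, so every superposition carries exactly two paths that pair up $\{u,v,w,s\}$.

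The first real step is to determine, for each product, which endpoint-pairings can occur. Because $G$ is bipartite and each path alternates between $M_1$- and $M_2$-edges, a parity count on the two end-edges shows that a path joins vertices of opposite parts $V_1,V_2$ exactly when its endpoints are left uncovered by opposite matchings, and joins vertices of the same part when they are uncovered by the same matching. Carrying this out with $u,w\in V_1$ and $v,s\in V_2$, one finds that for $\M(G)\M(G-\{u,v,w,s\})$ the only feasible pairings are $\{uv,ws\}$ and $\{us,vw\}$; for $\M(G-\{u,v\})\M(G-\{w,s\})$ they are $\{uv,ws\}$ and $\{uw,vs\}$; and for $\M(G-\{u,s\})\M(G-\{v,w\})$ they are $\{us,vw\}$ and $\{uw,vs\}$.

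The topological heart of the proof is to discard the crossing pairing $\{uw,vs\}$ from the two right-hand products; this is the only place the planarity hypothesis and the cyclic order of $u,v,w,s$ on a face $F$ are used. If some superposition had a path from $u$ to $w$ and a vertex-disjoint path from $v$ to $s$, then closing the $u$--$w$ path by an arc drawn inside the empty face $F$ would produce a Jordan curve separating $v$ from $s$ (they lie on opposite arcs of $\partial F$ in the order $u,v,w,s$). The $v$--$s$ path would then be forced to cross either this arc, which is impossible since the interior of $F$ contains no edges, or the $u$--$w$ path, which is impossible for disjoint paths in a planar embedding. Thus $\{uw,vs\}$ never occurs, and each right-hand product keeps a single pairing type.

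Finally I would match the surviving configurations by a weight-preserving bijection, grouping superpositions by pairing type. For a fixed choice of uncovered vertices, the doubled edges and the two paths admit a forced alternation back into $M_1$ and $M_2$, while each of the $c$ cycles of $U$ can be $2$-coloured in exactly two ways; hence a given union $U$ contributes $2^{c}\,\wt(U)$ in every product in which it appears, and $\wt(U)$ (the product of edge weights with multiplicity) depends only on $U$. Writing $A$ and $B$ for the weighted sums over unions of pairing type $\{uv,ws\}$ and $\{us,vw\}$, one checks that the same sets of unions are realized, so $\M(G)\M(G-\{u,v,w,s\})=A+B$, $\M(G-\{u,v\})\M(G-\{w,s\})=A$, and $\M(G-\{u,s\})\M(G-\{v,w\})=B$, which together give (\ref{Kuoeq}). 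I expect the main obstacle to be the Jordan-curve step, since it is the unique point where planarity and the cyclic-order hypothesis are essential and where one must argue carefully that no deformation of the two disjoint paths can avoid a crossing.
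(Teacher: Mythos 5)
The paper offers no proof of this statement: it is imported verbatim as Theorem 5.1 of Kuo's paper \cite{Kuo} and used as a black box, so there is no in-paper argument to compare against. What you have written is essentially Kuo's own superposition proof, and its overall structure is sound: superpose the two matchings of each product into doubled edges, alternating cycles, and exactly two alternating paths ending at $u,v,w,s$; eliminate endpoint pairings by parity; eliminate the crossing pairing $\{uw,vs\}$ by planarity and the cyclic order on the face; and match the surviving unions across the three products, each union contributing $2^{c}\wt(U)$ with $c$ the number of cycles. One sentence is stated backwards, however: a path whose two endpoints are missed by the \emph{same} matching has both end-edges in the \emph{other} matching, hence odd length, hence joins \emph{opposite} parts of the bipartition, while endpoints missed by opposite matchings give an even path joining the same part --- you assert the reverse. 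Fortunately the pairing lists you then derive for the three products ($\{uv,ws\}$ or $\{us,vw\}$; $\{uv,ws\}$ or $\{uw,vs\}$; $\{us,vw\}$ or $\{uw,vs\}$) are exactly the ones the correct rule yields, so this is a wording slip rather than a gap, but it should be fixed. The only step left implicit is the claim that ``the same sets of unions are realized'': for a union of type $\{uv,ws\}$, say, the forced alternation along each odd path is consistent whether one demands the end-edges lie in the first matching (decomposing $U$ as a matching of $G$ plus one of $G-\{u,v,w,s\}$) or in the second (decomposing it as a matching of $G-\{u,v\}$ plus one of $G-\{w,s\}$), each in exactly $2^{c}$ ways; adding that one observation makes the final bijection complete.
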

This theorem is the key of our proof in the next section.

\section{New proof of Theorem \ref{kamiokathm}}

Lozenges have three orientations: \emph{left, right} and \emph{horizontal} as in Figure \ref{lozengehorizontal}. We call the vertical line passing the north vertex of the semi-regular hexagon $H_{r,c,n}$ the \emph{axis} of the hexagon (see the dotted line in Figure \ref{Kuotrace2}).

\begin{figure}\centering
\includegraphics[width=6cm]{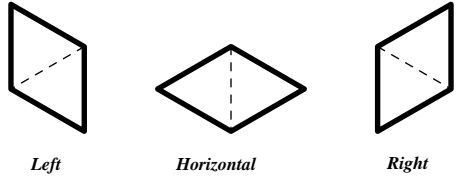}
\caption{Three orientations of lozenges.}
\label{lozengehorizontal}
\end{figure}

We first encode each plane partition $\pi$ in $\mathcal{P}(r,c,n)$  as a lozenge tiling of the semi-regular hexagon $H_{r,c,n}$ whose weight is $a^{\tr(\pi)}q^{|\pi|}$ as follows. View $\pi$ as a pile of $|\pi|$ unit cubes fitting in a $r \times c \times n$ box as in Figure \ref{BoxedPP}. Here, each horizontal lozenge in the tiling corresponding to $\pi$ is pictured as the top of a column of unit cubes. We assign to each horizontal lozenge a weight $q^h$, where $h$ is the height of its corresponding column, except for the ones on the axis that are assigned a weight $(aq)^h$. All remaining lozenges are weighted by $1$. It is easy to see that the weight of the tiling is $a^{\tr(\pi)}q^{|\pi|}$. This weight assignment is called the \emph{natural $q$-weight assignment} of the lozenge tiling.

For each horizontal lozenge $\ell$, we draw the vertical line passing the top and the bottom vertices of $\ell$. Assume that this vertical line passing $t$ other vertical lozenges above $\ell$ before meeting  the northeast or northwest side of the hexagon. We say that $t$ is the \emph{depth} of the horizontal lozenge $\ell$ (see Figure \ref{height}). It is easy to see that the depth $t$ of a horizontal lozenge on the $i$ lattice path and the height $h$ of the corresponding column of unit cubes are related by the following identity
\[t=n-h+(i-1).\]

\begin{figure}\centering
\includegraphics[width=5cm]{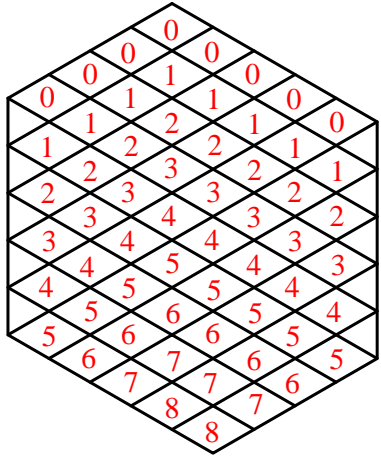}
\caption{The depth of the lozenges in a hexagon.}
\label{height}
\end{figure}

One readily sees that the above weight assignment \emph{depends} on the choice of the tilings. We would like to modify this weight assignment as follows.

First, we encode each tiling as a $r$-tuple of  non intersecting lozenge paths connecting the northwest and southeast sides of the hexagon (see Figure \ref{lozengepath}; the lozenge paths consisting horizonal and right lozenges running along the dotted paths). Multiply the weight of horizontal lozenges on the $i$-th path (ordered from top to bottom) by $q^{-n-i+1}i$, the one intersect the vertical axis by $(aq)^{-n-i+1}$. Second,  we multiply the weight of a horizontal lozenge on the $i$-th path that is passed through by the vertical axis by $\frac{(q^{n};q^{-1})_{n-t_i}}{(aq^{n};q^{-1})_{n-t_i}}$, where $t_i$ is the depth the lozenge. This way each horizontal lozenge is weighted by $q^{-t}$, and the one on the axis is weighted by $(aq)^{-t}\frac{(q^{n};q^{-1})_{n-t}}{(aq^{n};q^{-1})_{n-t}}$, where $t$ is the distance between the top of the horizontal lozenge and the southwest side of the hexagon. We denote by $\wt$ this new weight assignment; and this weight assignment does \emph{not} depend on the choice of tiling.

\begin{figure}\centering
\includegraphics[width=6cm]{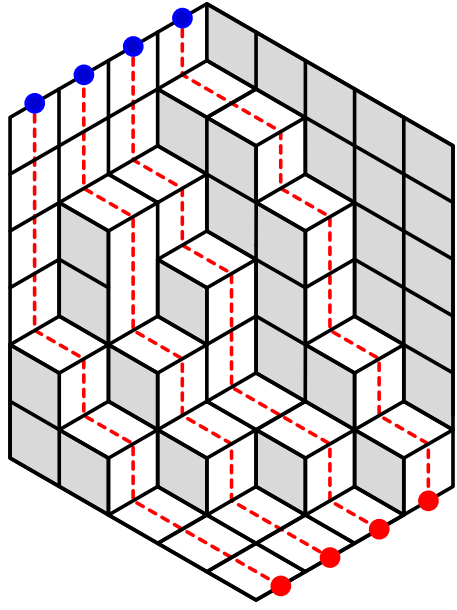}
\caption{Encode each lozenge tiling of the hexagon $H_{3,5,6}$ as a $3$-tuples of distinct lozenge paths.}
\label{lozengepath}
\end{figure}

\begin{lem}
Let $\pi$ be any plane partition in $\mathcal{P}(r,c,n)$, and $T_{\pi}$ be the lozenge tiling of the hexagon $H_{r,c,n}$ corresponding to $\pi$. Then
\begin{equation}
A\wt(T_{\pi})=a^{-d(d+1)/2-(n+1)d}q^{-c(r(r+1)/2+(n-1)r)} q^{|\pi|}a^{tr(\pi)}w_n(\pi;a;q),
\end{equation}
where $w_n(\pi,;a;q)=\prod_{k=1}^{\pi_{1,1}}\frac{(q^{n-k+1};q)_{D_{k}(\pi)}}{(aq^{n-k+1};q)_{D_{k}(\pi)}}$ as defined in Kamioka's Theorem \ref{kamiokathm}, $d=\min (r,c)$ is the number of horizontal lozenges on the vertical axis, and
\[A=\prod_{i=1}^{d}\frac{(aq^{n+i-1};q^{-1})_{i-1}}{(q^{n+i-1};q^{-1})_{i-1}}.\]
\end{lem}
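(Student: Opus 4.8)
The plan is to evaluate $\wt(T_\pi)$ by exploiting the fact, established just before the statement, that after the modification the weight of each horizontal lozenge depends only on its position and not on the surrounding tiling; hence $\wt(T_\pi)$ is simply the product of these individual weights. First I would write
\[
\wt(T_\pi)=\prod_{\ell}\wt(\ell),
\]
the product ranging over all horizontal lozenges of $T_\pi$, and split it into the $d=\min(r,c)$ lozenges lying on the axis and the remaining non-axis ones. By construction a non-axis lozenge on the $i$-th path contributes $q^{-t}$, while an axis lozenge contributes $(aq)^{-t}\,\frac{(q^{n};q^{-1})_{n-t}}{(aq^{n};q^{-1})_{n-t}}$, where $t=n-h+(i-1)$ ties the depth $t$ to the column height $h$ as recorded before the statement.

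The second step is to peel off the monomial part of each factor. Writing $q^{-t}=q^{h}\cdot q^{-(n+i-1)}$ and $(aq)^{-t}=a^{h}q^{h}\cdot a^{-(n+i-1)}q^{-(n+i-1)}$, the pieces $q^{h}$ and $a^{h}$ recombine across all lozenges into the natural weight $q^{|\pi|}a^{\tr(\pi)}$, because the heights of the axis columns are exactly the diagonal entries $\pi_{i,i}$ whose sum is $\tr(\pi)$. The leftover normalization exponents are then summed: since each of the $r$ paths carries exactly $c$ horizontal lozenges, the sum $\sum_{i=1}^{r}(n+i-1)$ contributes the $q$-prefactor $q^{-c(r(r+1)/2+(n-1)r)}$, while the analogous sum over the $d$ axis lozenges, together with the monomial content of the $A$-normalization, yields the $a$-prefactor $a^{-d(d+1)/2-(n+1)d}$.

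The heart of the argument is to identify the product of the $q$-Pochhammer ratios attached to the axis lozenges with Kamioka's weight $w_n(\pi;a;q)$, up to the constant $A$. These ratios are naturally indexed by the diagonal position $i$, the $i$-th having length controlled by $\pi_{i,i}$; Kamioka's $w_n$, by contrast, is indexed by the truncation level $k$ with lengths $D_k(\pi)$. The bridge between the two indexings is the elementary identity
\[
D_k(\pi)=\#\{\,i:\pi_{i,i}\ge k\,\},
\]
which holds because the diagonal $\pi_{1,1}\ge\pi_{2,2}\ge\cdots$ is weakly decreasing and the Durfee square of $\lambda^{(k)}(\pi)$ is cut out precisely by the diagonal cells surviving the $k$-truncation (indeed $\lambda^{(k)}_s\ge s$ iff $\pi_{s,s}\ge k$). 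Expanding each $q$-Pochhammer symbol as a product over its rows and invoking this identity lets me interchange $\prod_{k}\prod_{i=1}^{D_k}$ with $\prod_{i}\prod_{k=1}^{\pi_{i,i}}$; the two resulting double products then run over the same factors $(1-q^{n-k+i})/(1-aq^{n-k+i})$ but over complementary exponent ranges, and the constant $A$ is exactly the product of the boundary factors $\prod_{\ell=n+1}^{n+i-1}(1-aq^{\ell})/(1-q^{\ell})$ by which these ranges differ, so that including $A$ reconciles the axis product with $w_n$.

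I expect the main obstacle to be precisely this final reindexing, for two reasons. First, the per-lozenge $q$-Pochhammer symbols carry indices $n-t_i=\pi_{i,i}-i+1$ that vanish or become formally negative whenever $\pi_{i,i}<i$, so I must adopt the standard extension $(x;q)_{-m}=\bigl(\prod_{j=1}^{m}(1-xq^{-j})\bigr)^{-1}$ and verify that the telescoping of numerators against denominators survives at these degenerate indices. Second, keeping track simultaneously of the $q$-content, the $a$-content, and the residual monomial while interchanging the two products is delicate bookkeeping: the verification amounts to checking that, after the interchange, the leftover monomial in $a$ and $q$ together with $A$ matches the stated prefactor exactly. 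Once the Durfee identity is fixed and the convention for short symbols is in force, this last check is mechanical.
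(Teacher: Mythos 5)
Your proposal is correct and follows essentially the same route as the paper: extract the monomial content of each horizontal lozenge's weight to recover $q^{|\pi|}a^{\tr(\pi)}$ times the stated $a$- and $q$-prefactors, then reconcile the axis Pochhammer ratios with $w_n(\pi;a;q)$ by interchanging $\prod_{k}\prod_{i=1}^{D_k(\pi)}$ with $\prod_{i}\prod_{k=1}^{\pi_{i,i}}$ via the Durfee-square identity, with $A$ absorbing the boundary factors $\prod_{\ell=n+1}^{n+i-1}(1-aq^{\ell})/(1-q^{\ell})$. Your explicit statement of $D_k(\pi)=\#\{i:\pi_{i,i}\ge k\}$ and your caution about degenerate Pochhammer indices are details the paper leaves implicit, but the argument is the same.
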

\begin{proof}
 We first assign the natural weight assignment on the lozenges of the tiling $T_{pi}$. The weight of the tiling $T_{\pi}$ is now $q^{|\pi|}a^{tr(\pi)}$. Next, we investigate how the weight of $T_{\pi}$ changes when we convert the natural weight of $T_{\pi}$ into the weight assignment $\wt$.

Consider the $r$-tuple of lozenge paths corresponding to the tiling $T_{\pi}$. Each lozenge path has exactly $c$ right lozenges, in which there is exactly one passed through by the vertical axis.  Thus, the factor for the first step in the above weight modification is $a^{-d(d+1)/2-(n+1)d}q^{-c(r(r+1)/2+(n-1)r)}$.

Let us consider the factor for the second step.
We write each factor $\frac{(q^{n-k+1};q)_{D_{k}(\pi)}}{(aq^{n-k+1};q)_{D_{k}(\pi)}}$ in $w_n(\pi,;a;q)$ as
\begin{align}
w_n(\pi,a,q)&=\prod_{k=1}^{\pi_{1,1}}\frac{(q^{n-k+1};q)_{D_{k}(\pi)}}{(aq^{n-k+1};q)_{D_{k}(\pi)}}=\prod_{k=1}^{\pi_{1,1}}\prod_{i=1}^{D_{k}(\pi)}\frac{1-q^{n-k+i}}{1-aq^{n-k+i}}\notag\\
&=\prod_{i=1}^{d}\prod_{k=1}^{h_i}\frac{1-q^{n-k+i}}{1-aq^{n-k+i}}=\prod_{i=1}^{d}\frac{(q^{n+i-1};q^{-1})_{h_i}}{(aq^{n+i-1};q^{-1})_{h_i}},\end{align}
where $h_i$ the height of the column corresponding to the $i$-th horizontal lozenge on the vertical axis (ordered from the top to the bottom).

 There are exactly $d=\min(r,c)$ right lozenges on the axis . The $i$-th one ($i=1,2,\dotsc,d$) from the top has weight
 \begin{align}
 \frac{(aq^{n};q^{-1})_{n-t_i}}{(aq^{n};q^{-1})_{n-t_i}}= \frac{(q^{n+i-1};q^{-1})_{h_i}}{(aq^{n+i-1};q^{-1})_{h_i}} \cdot \frac{(aq^{n+i-1};q^{-1})_{i-1}}{(q^{n+i-1};q^{-1})_{i-1}}.
 \end{align}
This implies that product of all the $d$ horizontal lozenges along the vertical axis is exactly $w_n(\pi,a,q)\times A$, where $A=\prod_{i=1}^{d}\frac{(aq^{n+i-1};q^{-1})_{i-1}}{(q^{n+i-1};q^{-1})_{i-1}}$.

Combining the factors in the two steps, we get
\[wt(T_{\pi})=A a^{-d(d+1)/2-(n+1)d}q^{-c(r(r+1)/2+(n-1)r)}q^{|\pi|}a^{tr(\pi)}w_n(\pi;a;q)\]
\end{proof}

By the above lemma, in order to prove Kamioka's theorem, we need to show that the tiling generating function of the hexagon $H_{r,c,n}$ weighted by $\wt$ is given by
\begin{equation}\label{main}
\M(H_{r,c,n})=a^{-d(d+1)/2-(n+1)d}q^{-c(r(r+1)/2+(n-1)r)}A \prod_{i=1}^{r}\prod_{j=1}^{c}\prod_{k=1}^{n}\frac{1-aq^{i+j+k-1}}{1-aq^{i+j+k-2}}.
\end{equation}

\medskip

We prove (\ref{main}) by induction on $r+c+n$. The base cases are the situations when at least one of $r,c,n$ equals to $0$.
When $c=0$ or $r=0$, then $d=0$. It implies that all terms in the right-hand side of (\ref{main}) are 1, and it is easy to see that the left-hand side is also 1.
When $n=0$, the hexagon $H_{r,c,0}$ has only one tiling consisting all horizontal lozenges. Then the right-hand side of  (\ref{main}) is simply the weight product of all the horizontal lozenges, which is
\begin{align}
a^{-d(d+1)/2-(n-1)d}q^{-c(r(r+1)/2+(n-1)r)}&\prod_{i=1}^{d}\frac{(aq^{n+i-1};q^{-1})_{i-1}}{(q^{n+i-1};q^{-1})_{i-1}}\\
&=a^{-d(d+1)/2-(n+1)d}q^{-c(r(r+1)/2+(n-1)r)}A.
\end{align}
This verifies the formula (\ref{main}) in this case.

\bigskip

For induction step, we assume that (\ref{main}) holds for any hexagon $H(r,c,n)$ in which the $r$-, $c$- and $n$-parameters are positive and whose sum is strictly less than $r+c+n$.

We apply Kuo condensation Theorem \ref{Kuothm} to the dual graph $G$ of the hexagon $H_{r,c,n}$ weighted by $\wt$ as in Figure \ref{Kuotrace}; the black triangles indicate the unit triangles corresponding to the four vertices $u,v,w,s$. In particular, the $u$-triangle is the black one on the southwest corner of the hexagon, and the $v$-, $w$-, and $s$-triangles are the black ones appearing respectively when we go counter-clockwise from the $u$-triangle.

\begin{figure}\centering
\includegraphics[width=6cm]{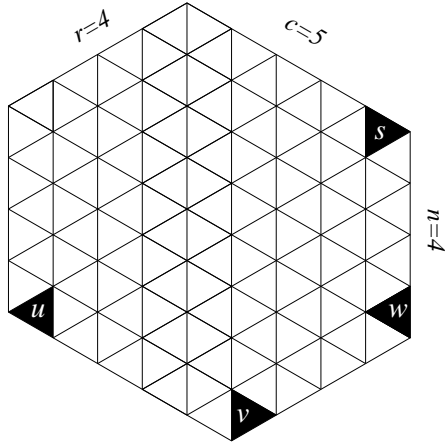}
\caption{How we apply Kuo condensation.}
\label{Kuotrace}
\end{figure}

Let us  consider the region corresponding to the graph $G-\{u,v,w,s\}$. It has several forced lozenges. All of the forced lozenges have weight 1, except for $r-1$ horizontal ones long the southeast side of the hexagon, whose weights are $q^{-n},q^{-(n+1)},\dotsc,q^{-(n+r-2)}$ as they appear from right to left. Removing these forced lozenges, we get the hexagon $H_{r-1,c-1,n}$ weighted by $\wt$ (see Figure \ref{Kuotrace2}(b); the hexagon $H_{r-1,c-1,n}$ is indicated the the one with bold contour).  Thus, we get
\begin{equation}\label{eq1a}
\M(G-\{u,v,w,s\})=q^{-(2n+r-2)(r-1)/2}\M(H_{r-1,c-1,n}).
\end{equation}

\begin{figure}\centering
\includegraphics[width=12cm]{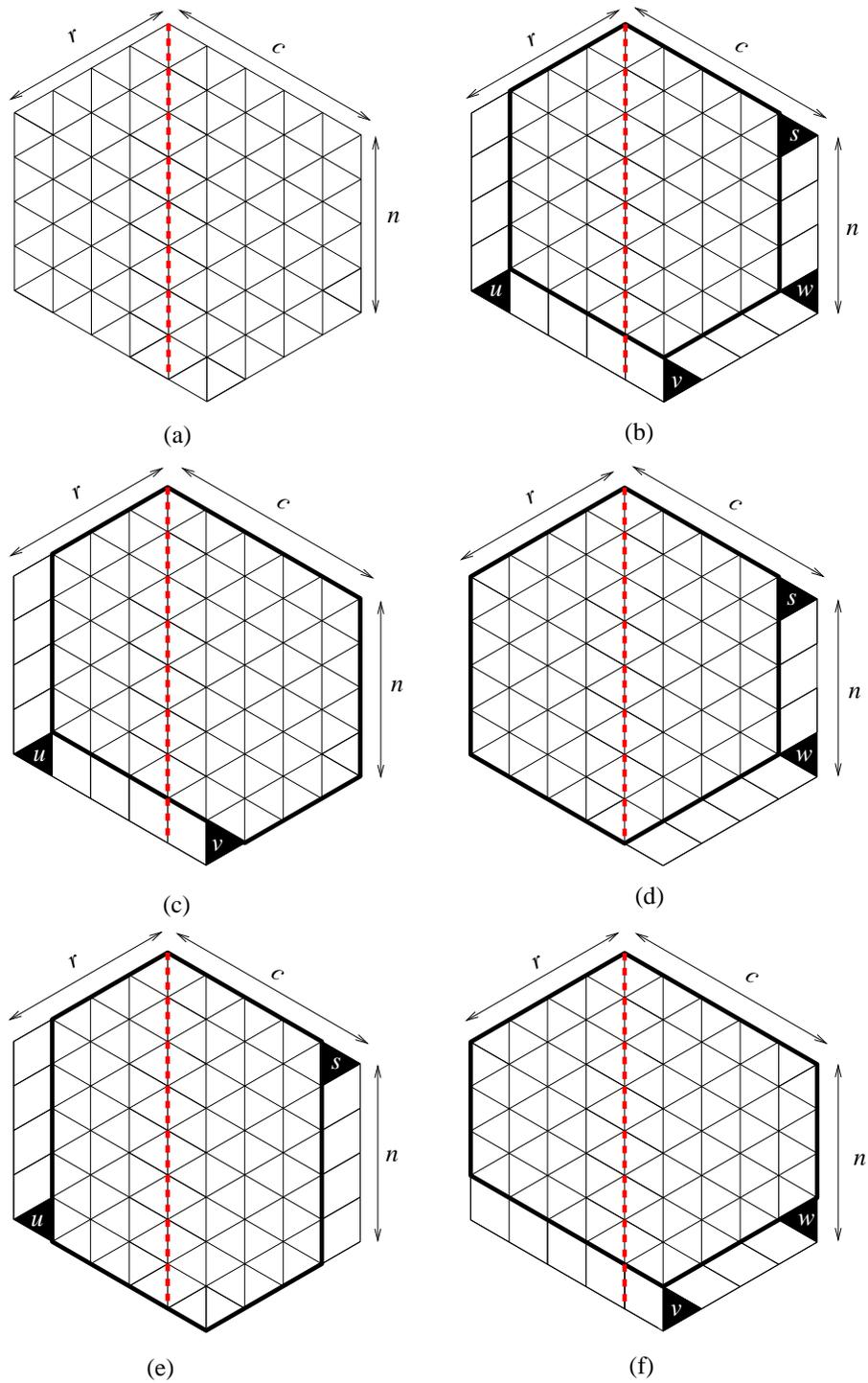}
\caption{Obtaining the recurrence.}
\label{Kuotrace2}
\end{figure}

Similar, we remove the forced lozenges (which have all weight 1) in the region corresponding to $G-\{u,v\}$ and get a the hexagon $H_{r-1,c,n}$ weighted by $\wt$ (see Figure \ref{Kuotrace2}(c)).
\begin{equation}\label{eq1b}
\M(G-\{u,v\})=\M(H_{r-1,c,n}).
\end{equation}

Next, if we remove forced lozenges with weight product $q^{-(2n+r-1)r}$ from the region corresponding to the graph $G-\{w,s\}$, we get the hexagon $H_{r,c-1,n}$ weighted by $\wt$ (see Figure \ref{Kuotrace2}(d)).
\begin{equation}\label{eq1c}
\M(G-\{w,s\})=q^{-(2n+r-1)r}\M(H_{r,c-1,n}).
\end{equation}

The removal of forced lozenges (with weight 1) in the region corresponding to $G-\{u,s\}$ gives us the hexagon $H_{r-1,c-1,n+1}$ (illustrated in  Figure \ref{Kuotrace2}(e)). Next, we divide the weight of each horizontal lozenges along the vertical axis by $\frac{(1-aq^{n+1})}{(1-q^{n+1})}$ to get the weight assignment $\wt$. Since each lozenge tiling of $H_{r-1,c-1,n+1}$ has exactly $d-1$ horizontal lozenges along the vertical axis, this changes the weight of each tiling of by a factor $\frac{(1-aq^{n+1})^{d-1}}{(1-q^{n+1})^{d-1}}$. Then we gat
\begin{equation}\label{eq1d}
\M(G-\{u,s\})=\frac{(1-aq^{n+1})^{d-1}}{(1-q^{n+1})^{d-1}}\M(H_{r-1,c-1,n+1}).
\end{equation}

Finally, we get a weighted hexagon $H_{r,c,n-1}$ from the removal of forced lozenges from the region corresponding to $G-\{v,w\}$ (shown in  Figure \ref{Kuotrace2}(f)). The weight product of the forced lozenges is $q^{-(2n+r-2)(r-1)/2}$. Next, we divide the weight of each horizontal lozenge on the vertical axis by $\frac{(1-q^n)}{(1-aq^{n})}$ and  get back the weight assignment $\wt$. Thus, we obtain
\begin{equation}\label{eq1e}
\M(G-\{v,w\})=q^{-(2n+r-2)(r-1)/2}\frac{(1-q^n)^d}{(1-aq^{n})^d}\M(H_{r,c,n-1}).
\end{equation}
Substituting the equations (\ref{eq1a})--(\ref{eq1e}) into the equation (\ref{Kuoeq}) of  Kuo's Theorem \ref{Kuothm}, we have a recurrence on the tiling generating functions of the hexagons:
\begin{align}\label{recurrence}
\M(H_{r,c,n})\M(H_{r-1,c-1,n})=&q^{-n-r+1}\M(H_{r-1,c,n})\M(H_{r,c-1,n})\notag\\
&+\frac{(1-aq^{n+1})^{d-1}}{(1-q^{n+1})^{d-1}}\frac{(1-q^n)^d}{(1-aq^{n})^d}\M(H_{r-1,c-1,n+1})\M(H_{r,c,n-1}).
\end{align}

\medskip

Our final work is verifying that the expression on the right-hand side of (\ref{main}) satisfies the same recurrence (\ref{recurrence}).
Let $\phi(r,c,n)$ denote this expression, then we need to show that
\begin{align}
\phi(r,c,n)\phi(r-1,c-1,n)=&q^{-n-r+1}\phi(r-1,c,n)\phi(r,c-1,n)\notag\\
&+\frac{(1-aq^{n+1})^{d-1}}{(1-q^{n+1})^{d-1}}\frac{(1-q^n)^d}{(1-aq^{n})^d}\phi(r-1,c-1,n+1)\phi(r,c,n-1)
\end{align}
or
\begin{align}\label{mainr1}
q^{-n-r+1}\frac{\phi(r-1,c,n)}{\phi(r,c,n)}&\frac{\phi(r,c-1,n)}{\phi(r-1,c-1,n)}\notag\\
&+\frac{(1-aq^{n+1})^{d-1}}{(1-q^{n+1})^{d-1}}\frac{(1-q^n)^d}{(1-aq^{n})^d}\frac{\phi(r,c,n-1)}{\phi(r,c,n)}\frac{\phi(r-1,c-1,n+1)}{\phi(r-1,c-1,n)}=1.
\end{align}

We first simplify the fraction $\frac{\phi(r-1,c,n)}{\phi(r,c,n)}$ in the first term on the left-hand side of (\ref{mainr1}) as
\begin{align}
\frac{\phi(r-1,c,n)}{\phi(r,c,n)}&=\frac{q^{-c(r(r-1)/2+(n-1)(r-1))}}{q^{-c(r(r+1)/2+(n-1)r)}}\prod_{j=1}^{c}\prod_{k=1}^{n}\frac{1-aq^{r+j+k-2}}{1-aq^{r+j+k-1}}\notag\\
&=q^{c(n+r-1)}\prod_{j=1}^{c}\prod_{k=1}^{n}\frac{1-aq^{r+j+k-2}}{1-aq^{r+j+k-1}}.
\end{align}
Working similarly on the faction $\frac{\phi(r,c-1,n)}{\phi(r-1,c-1,n)}$ of the first term and multiplying the two fraction up, we obtain
\begin{align}\label{mainr2}
\frac{\phi(r-1,c,n)}{\phi(r,c,n)}\frac{\phi(r,c-1,n)}{\phi(r-1,c-1,n)}&=q^{n+r-1}\prod_{k=1}^{n}\frac{1-aq^{r+c+k-2}}{1-aq^{r+c+k-1}}\notag\\
&=q^{n+r-1}\frac{1-aq^{r+c-1}}{1-aq^{r+c+n-1}}.
\end{align}

We now simplify the  fraction $\frac{\phi(r,c,n-1)}{\phi(r,c,n)}$ in the second term on the left-hand side of (\ref{mainr1}) as
\begin{align}
\frac{\phi(r,c,n-1)}{\phi(r,c,n)}&=a^d q^{cr} \prod_{i=1}^{d} \frac{1-aq^{n}}{1-aq^{n+i-1}}\frac{1-q^{n+i-1}}{1-q^n}\notag\\
&\times \prod_{i=1}^{r}\prod_{j=1}^{c}\frac{1-aq^{i+j+n-2}}{1-aq^{i+j+n-1}}.
\end{align}
Working similarly on the faction $\frac{\phi(r-1,c-1,n+1)}{\phi(r-1,c-1,n)}$ of the second term and multiplying these two fraction up, we get
\begin{align}\label{mainr3}
\frac{\phi(r,c,n-1)}{\phi(r,c,n)}\frac{\phi(r-1,c-1,n+1)}{\phi(r-1,c-1,n)}&=a q^{r+c-1}  \frac{(1-aq^{n})^{d-1}}{(1-q^{n})^{d-1}}\frac{(1-q^{n+1})^{d-1}}{(1-aq^{n+1})^{d-1}}\notag\\
&\times \prod_{i=1}^{r}\prod_{j=1}^{c}\frac{1-aq^{i+j+n-2}}{1-aq^{i+j+n-1}} \prod_{i=1}^{r-1}\prod_{j=1}^{c-1}\frac{1-aq^{i+j+n}}{1-aq^{i+j+n-1}}\notag\\
&=a q^{r+c-1}  \frac{(1-aq^{n})^{d-1}}{(1-q^{n})^{d-1}}\frac{(1-q^{n+1})^{d-1}}{(1-aq^{n+1})^{d-1}}\frac{(1-aq^n)}{(1-aq^{r+c+n-1})}.
\end{align}
By (\ref{mainr2}) and  (\ref{mainr3}), we now have
\begin{align}\label{mainr4}
q^{-n-r+1}\frac{\phi(r-1,c,n)}{\phi(r,c,n)}&\frac{\phi(r,c-1,n)}{\phi(r-1,c-1,n)}\notag\\
&+\frac{(1-aq^{n+1})^{d-1}}{(1-q^{n+1})^{d-1}}\frac{(1-q^n)^d}{(1-aq^{n})^d}\frac{\phi(r,c,n-1)}{\phi(r,c,n)}
\frac{\phi(r-1,c-1,n+1)}{\phi(r-1,c-1,n)}\notag\\
&=\frac{1-aq^{r+c-1}}{1-aq^{r+c+n-1}}+aq^{r+c-1}\frac{1-q^n}{1-aq^{r+c+n-1}}\notag\\
&=1.
\end{align}
This verifies (\ref{mainr1}) and finishes our proof.
\section{Concluding Remarks}
 Gansner \cite{Gansner1, Gansner2} generalized Stanley's trace formula by introducing the \emph{$\ell$-traces} of the plane partition as $tr_{\ell}(\pi)=\sum_{i-j=\ell}\pi_{i,j}$. He showed that
 \begin{equation}
 \sum_{\pi\in\mathcal{P}(r,c)}\prod_{-r<\ell<c}=\prod_{i=0}^{r-1}\prod_{j=0}^{c-1}\left(1-\prod_{\ell=-i}^{j}q_{\ell}\right)^{-1}.
 \end{equation}
Kamioka also proved a finite analogue of Ganser's formula (Theorem 17 in \cite{Kam}). I believe that our method can be used to prove this theorem of Kamioka.

\end{document}